\newtheorem{theorem}{Theorem}
\newtheorem{proposition}{Proposition}
\newtheorem{lemma}{Lemma}
\newtheorem{corollary}{Corollary}
\newtheorem*{theoremb}{Theorem [Boyd-Schulz]}
\newtheorem*{theoremb2}{Theorem [Boyd-Schulz]}
\title{Geometric Limits of Julia Sets \\with Parameters on the Circle}
\email{skaschner@math.arizona.edu}
\address{ %
University of Arizona Department of Mathematics\\
Mathematics Building, Room 317\\
617 N. Santa Rita Ave.\\
Tucson, AZ 85721-0089\\
 United States }
\email{rromero@email.arizona.edu}
\address{ %
University of Arizona Department of Mathematics\\
Mathematics Building\\
617 N. Santa Rita Ave.\\
Tucson, AZ 85721-0089\\
 United States }
\email{davidsimmons@email.arizona.edu}
\address{ %
University of Arizona Department of Mathematics\\
Mathematics Building\\
617 N. Santa Rita Ave.\\
Tucson, AZ 85721-0089\\
 United States }
\begin{document}


\begin{abstract}
We show that the geometric limit as $n \rightarrow \infty$ of the filled Julia sets $K(P_{n,c})$ for the maps $P_{n,c}(z) = z^n + c$ does not exist for almost every $c$ on the unit circle.  Furthermore, we show that there is always a subsequence along which the limit does exist and equals the unit circle, and this is used to show that for certain parameters, the geometric limit of the Julia sets $J(P_{n,c})$ is the unit circle.
\end{abstract}
\maketitle

\setcounter{section}{1}

Consider the family of maps
\[P_{n,c}(z)\ =\ z^n+c,\]
where $n\geq2$ is an integer and $c\in\mathbb C$ is a parameter.  These maps all share the quality that there is only one free critical point; that is, the critical point at infinity is fixed under iteration, and the iterates of the remaining critical point, $z=0$, depend on both $c$ and $n$.  Because of this uni-critical property, many dynamical properties of the classical quadratic family $z\mapsto z^2+c$ are also exhibited by this family of maps. Details of this family are readily available in the literature \cite{mcmullen, SCHLEICHER, LYUBICH}.

In this note, we will consider the filled Julia set $K(P_{n,c})$, the set of points in $\mathbb C$ that remain bounded under iteration and its boundary, the Julia set $J(P_{n,c})$.  In \cite{boyd}, the structure of the filled Julia set $K(P_{n,c})$ and its boundary $J(P_{n,c})$, the Julia set, as $n\rightarrow\infty$ was examined.  One of the major results is this work was
\begin{theoremb}
Let $c\in\mathbb C$, and let $CS(\hat{\mathbb C})$ denote the collection of all compact subsets of $\hat{\mathbb C}$.  Then under the Hausdorff metric $d_{\mathcal H}$ in $CS(\hat{\mathbb C})$,
\begin{itemize}
\item[(1)] If $c\in\mathbb C\backslash\overline{\mathbb D}$, then
\[\displaystyle\lim_{n\rightarrow\infty}J(P_{n,c})=\displaystyle\lim_{n\rightarrow\infty}K(P_{n,c})=S^1.\]
\item[(2)] If $c\in\mathbb D$, then
\[\displaystyle\lim_{n\rightarrow\infty}J(P_{n,c})=S^1\mbox{ and }\displaystyle\lim_{n\rightarrow\infty}K(P_{n,c})=\overline{\mathbb D}.\]
\item[(3)] If $c\in S^1$, then if
$\displaystyle\lim_{n\rightarrow\infty}J(P_{n,c})$ and/or $\displaystyle\lim_{n\rightarrow\infty}K(P_{n,c})$ (and/or any liminf or limsup) exists, it is contained in $\overline{\mathbb D}$.
\end{itemize}
\end{theoremb}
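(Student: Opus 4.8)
The plan is to reduce all three cases to a single \emph{a priori localization} plus, in the two convergent cases, a matching lower estimate showing the sets cannot avoid any point of $S^1$. \textbf{Step 1 (localization).} Put $R=\max(|c|+1,2)$. If $|z|=R$ then $|P_{n,c}(z)|\ge R^n-|c|\ge R^2-|c|>R$, so $K(P_{n,c})\subseteq\overline{D(0,R)}$ for all $n\ge2$; since $K(P_{n,c})$ is forward invariant, $K(P_{n,c})\subseteq P_{n,c}^{-1}\bigl(\overline{D(0,R)}\bigr)$, and $|z^n+c|\le R$ forces $|z|\le(R+|c|)^{1/n}$. As $(R+|c|)^{1/n}\to1$ and $J(P_{n,c})=\partial K(P_{n,c})\subseteq K(P_{n,c})$, both families are squeezed into an arbitrarily thin closed disk about $\overline{\mathbb D}$; hence every Hausdorff subsequential limit (and any $\liminf$, $\limsup$) of either family lies in $\overline{\mathbb D}$, which is case~(3). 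When $|c|>1$ one gets more: if $|z|\le1-\delta$ then $|P_{n,c}(z)-c|=|z|^n\le(1-\delta)^n$, so $P_{n,c}(z)$ is within $o(1)$ of $c$ and $|P_{n,c}^2(z)|\ge(|c|-o(1))^n-|c|>R$ for $n$ large; thus such $z$ escapes and $K(P_{n,c})\subseteq\{\,1-\delta\le|z|\le(R+|c|)^{1/n}\,\}$ for $n\ge N(\delta)$ --- a thin annulus about $S^1$.

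\textbf{Step 2 (case $c\in\mathbb D$).} Fix $r\in(|c|,1)$. Since $r<1$, $r^n+|c|\le r$ for $n$ large, so $\overline{D(0,r)}$ is forward invariant and hence contained in $K(P_{n,c})$. For $|z|\le1-\epsilon$, $|P_{n,c}(z)-c|=|z|^n\to0$ puts $P_{n,c}(z)$ inside that invariant disk once $n$ is large, so $\overline{D(0,1-\epsilon)}\subseteq K(P_{n,c})$; with Step~1 this pinches $K(P_{n,c})\to\overline{\mathbb D}$. For $J(P_{n,c})=\partial K(P_{n,c})$: it lies in the thin annulus $\{\,1-\epsilon_n\le|z|\le(R+|c|)^{1/n}\,\}$, and because $K(P_{n,c})$ contains $\overline{D(0,1-\epsilon_n)}$ but sits inside $\overline{D(0,(R+|c|)^{1/n})}$, every radial ray from $0$ meets both $K(P_{n,c})$ and its complement, so $\partial K(P_{n,c})$ meets that ray at radius within $o(1)$ of $1$; thus $J(P_{n,c})$ is $o(1)$-dense in $S^1$ and $J(P_{n,c})\to S^1$.

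\textbf{Step 3 (case $c\in\mathbb C\setminus\overline{\mathbb D}$).} Now $0\mapsto c\mapsto c^n+c$ with $|c^n+c|\ge|c|^n-|c|\to\infty$, so for $n$ large the critical point escapes; by the classical fact that escape of the (only) free critical point makes $K(P_{n,c})$ totally disconnected, $K(P_{n,c})=J(P_{n,c})$, and it suffices to prove $J(P_{n,c})\to S^1$. The upper half is the thin-annulus statement of Step~1. For the lower half, take any $\zeta\in J(P_{n,c})$ (nonempty, since $\deg P_{n,c}\ge2$). By Step~1, $|\zeta|\le(R+|c|)^{1/n}<|c|$ for $n$ large, so $\zeta\ne c$ and $|c|-(R+|c|)^{1/n}\le|\zeta-c|\le|c|+(R+|c|)^{1/n}$, both bounds tending to positive limits; hence $|\zeta-c|^{1/n}\to1$ uniformly in $\zeta$. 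Complete invariance gives $P_{n,c}^{-1}(\zeta)\subseteq J(P_{n,c})$, and $P_{n,c}^{-1}(\zeta)$ is the set of $n$ points $|\zeta-c|^{1/n}e^{i(\arg(\zeta-c)+2\pi k)/n}$, $0\le k<n$: common modulus $\to1$, arguments forming a $\tfrac{2\pi}{n}$-net of the circle. Therefore $J(P_{n,c})$ is $\bigl(\tfrac{\pi}{n}+\bigl|1-|\zeta-c|^{1/n}\bigr|\bigr)$-dense in $S^1$, a quantity $\to0$; with the upper bound this gives $J(P_{n,c})=K(P_{n,c})\to S^1$.

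\textbf{The main obstacle} is the lower bound in case~(1): showing the Cantor set $J(P_{n,c})$ actually fills out $S^1$ rather than merely hugging it. The argument above does this cheaply by applying $P_{n,c}^{-1}$ just once and using that $|c|>1$ keeps $\zeta-c$ bounded away from $0$, so that $n$-th roots distribute uniformly on a circle of radius $\to1$; the price is that one must first know $J(P_{n,c})$ is nonempty, completely invariant, and already inside $\overline{D(0,(R+|c|)^{1/n})}$. A cleaner route handling cases~(1) and~(2) together is potential-theoretic: $\operatorname{cap}K(P_{n,c})=1$ since $P_{n,c}$ is monic, the Green's functions $G_n$ of $K(P_{n,c})$ converge to $\log^+|\cdot|$ locally uniformly off $S^1$ (the defect terms $\log\bigl|1+c\,P_{n,c}^{j}(z)^{-n}\bigr|$ being uniformly small), so the equilibrium measures of $K(P_{n,c})$ --- supported precisely on $J(P_{n,c})$ --- converge weakly to normalized Lebesgue measure on $S^1$, which forces $\liminf_n J(P_{n,c})\supseteq S^1$; Step~1 still supplies the reverse inclusion and Step~2 the statement $K(P_{n,c})\to\overline{\mathbb D}$.
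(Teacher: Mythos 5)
This statement is the theorem the paper imports from \cite{boyd}; the paper itself contains no proof of it, only the supporting Lemmas~\ref{bound}--\ref{ball} quoted from that reference, so your argument has to be judged on its own merits. It is correct, and its ingredients line up closely with the Boyd--Schulz machinery the paper does reproduce: your Step~1 localization is exactly the escape-radius argument behind Lemma~\ref{bound} ($K(P_{n,c})\subseteq\mathbb D_{1+\epsilon}$ for large $n$, sharpened to a thin annulus when $|c|>1$), and your Step~3 density argument --- pulling back a single $\zeta\in J(P_{n,c})$ to the $n$ equally spaced roots of $\zeta-c$ --- is the same mechanism as the $n$-fold rotational symmetry of $J(P_{n,c})$ exploited in Lemma~\ref{ball}. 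Your Step~2 radial-ray argument for $\partial K\to S^1$ is a clean elementary addition not spelled out in the paper. One small informality worth tightening: from ``$|z|=R$ implies $|P_{n,c}(z)|>R$'' alone you cannot conclude $K(P_{n,c})\subseteq\overline{D(0,R)}$; you need that the whole region $|z|\ge R$ escapes, which follows from the same estimate since $|z|\ge R$ gives $|P_{n,c}(z)|\ge|z|^2-|c|\ge|z|+\bigl(R(R-1)-|c|\bigr)\ge|z|+1$, so the modulus increases by a fixed amount at each step. With that repair, and granting the standard facts that $J(P_{n,c})$ is nonempty and completely invariant and that escape of the unique free critical point makes $K=J$ totally disconnected, all three cases go through; the concluding potential-theoretic remark is a legitimate alternative route but is not needed.
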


The purpose of this note is to improve part (3) of this result.  While there may be no limit as $n\rightarrow\infty$ for $J(P_{n,c})$ or $K(P_{n,c})$, experimentation suggests given $c\in S^1$, there is almost always a predictable pattern for the filled Julia set for $P_{n,c}$ as $n\rightarrow\infty$.  This experimentation led to the following result:

\begin{theorem}\label{main} Let $c=e^{2\pi i\theta}\in S^1$.  Suppose $\theta\neq0$ and for all $p\in\mathbb N$ and $q\in\mathbb Z$, $\theta\neq\frac{3q\pm1}{3(6p-1)}$. Then 
\[\displaystyle\lim_{n\rightarrow\infty}K(P_{n,c})\]
does not exist.  Moreover, if we also suppose $\theta$ is rational, 
then there exist subsequences $a_k$ and $b_k$ partitioning $\mathbb N$ such that
\[\displaystyle\lim_{k\rightarrow\infty}K(P_{a_k,c})\ =\ S^1\mbox{\hspace{.25in}and\hspace{.2in}}
\displaystyle\lim_{k\rightarrow\infty}K(P_{b_k,c})\ =\ \overline{\mathbb D}.\]
\end{theorem}

\begin{corollary} Let $c=e^{2\pi i\theta}$, where $\theta\neq0$ is rational and for all $p\in\mathbb N$ and $q\in\mathbb Z$, $\theta\neq\frac{3q\pm1}{3(6p-1)}$. Then 
\[\displaystyle\lim_{n\rightarrow\infty}J(P_{n,c})=S^1.\]
\end{corollary}

In Section \ref{BG}, we present the background material and motivation for this result.  The proof of Theorem \ref{main} is the focus of Section \ref{PROOF}.

The authors are grateful to both Mikhail Stepenov at the University of Arizona and Signe Emalia Jensen at Northwestern University for their helpful suggestions.

\section{Background and Motivation}\label{BG}

\subsection{Notation and Terminology}

The main results in this note rely on the convergence of sets in $\hat{\mathbb C}$, where the convergence is with respect to the Hausdorff metric.  Given two sets $A,B$ in a metric space $(X,d)$, the Hausdorff distance $d_{\mathcal H}(A,B)$ between the sets is defined as
\begin{eqnarray*}
d_{\mathcal H}(A,B)&=&\max\left\{\sup_{a\in A}d(a,B),\sup_{b\in B}d(b,A)\right\}\\
&=&\max\left\{\sup_{a\in A}\inf_{b\in B}d(a,b),\sup_{b\in B}\inf_{a\in A}d(a,b)\right\}.
\end{eqnarray*}
Each point in A has a minimal distance to B, and vice versa.  The Hausdorff distance is the maximum of all these distances.  For example, a regular hexagon $A$ inscribed in a circle $B$ of radius $r$ has sides of length $r$.  In this case, $d_{\mathcal A}(A,B)=r(1-\sqrt3/2)$, the shortest distance from the circle to the midpoint of a side of the hexagon.  See Figure \ref{circle}.  Julia sets $J(P_{n,c})$ and filled Julia sets $K(P_{n,c})$ are compact \cite{beardon} in the compact space $\hat{\mathbb C}$.  Moreover, with the Hausdorff metric $d_{\mathcal H}$, $\hat{\mathbb C}$ is complete \cite{henrikson}.

Suppose $S_n$ and $S$ are compact subsets of $\mathbb C$.  If for all $\epsilon>0$, there is $N>0$ such that for any $n\geq N$, we have $d_{\mathcal H}(S_n, S)<\epsilon$, then we say $S_n$ converges to $S$ and write $\lim_{n\rightarrow\infty}S_n=S$.

We adopt the notation from \cite{boyd}.  For an open annulus with radii $0<r<R$, 
\begin{equation*}
\mathbb A(r,R):=\{z\in\mathbb C\colon r<|z|<R\}.
\end{equation*}
Also, the open ball of radius $\epsilon>0$ centered at $z$ will be denoted $B(z,\epsilon)$.

\subsection{Motivation}

A basic fact from complex dynamics (see \cite{beardon} or \cite{milnor}) is that $K(P_{n,c})$ is connected if and only if the orbit of $0$ stays bounded; otherwise it is totally disconnected.   For each $n\geq2$, we define the Multibrot sets
\begin{equation*}
\mathcal M_n:=\{c\in\mathbb C\colon J(P_{n,c})\mbox{ is connected}\}.
\end{equation*}
Since $0$ is the only free critical point, $\mathcal M_n$ is also the set of parameters $c$ such that the orbit of $0$ under iteration by $P_{n,c}$ remains bounded \cite{milnor}.  Since the maps $P_{n,c}$ are uncritical, much of their dynamical behavior mimics the family of complex quadratic polynomials \cite{SCHLEICHER}.


It was proven in \cite{boyd} that for sufficiently large $N$, 
\begin{itemize}
\item[(1)] $c\in\mathbb D$ implies for any $n\geq N$, $0\in K(P_{n,c})$ (the orbit of 0 is bounded and $c\in\mathcal M_n$), and
\item[(2)] $c\in\mathbb C\backslash\overline{\mathbb D}$ implies for any $n\geq N$, $0\notin K(P_{n,c})$ (the orbit of 0 is not bounded and $c\notin\mathcal M_n$).
\end{itemize}
For parameters $c\in S^1$, $P_{n,c}(0)\in S^1$ for any $n$, and this obstructs the direct proof that the orbit of 0 remains bounded (or not).  However, one finds that in most cases, $P_{n,c}^2(0)\notin S^1$ and should expect that in these situations, determining whether the orbit of zero stays bounded depends heavily on where $P_{n,c}^2(0)$ is relative to the circle.  In fact, working with the second iterate of 0 will be sufficient for all of our proofs. 

\begin{figure}
\centering
\scalebox{.8}{\includegraphics{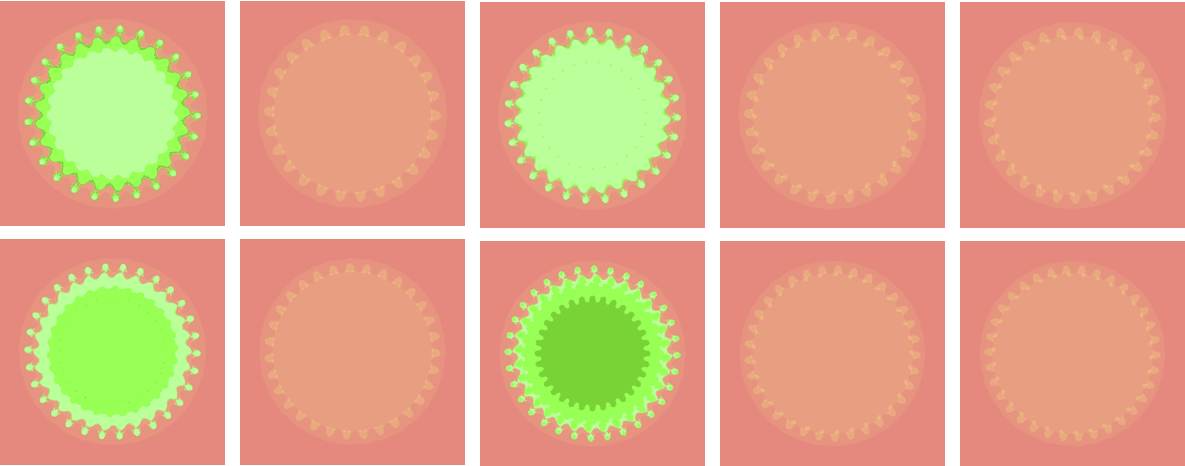}}
\caption{$J(P_{n,c})$ for $c=e^{4\pi i/5}$ and $n=25\dots34$, starting from the upper left to the lower right.}\label{Julias}
\end{figure}

Noting that $P_{n,c}^2(0)=P_{n,c}(c)$, we have the following convenient formula:
\begin{proposition}\label{formula}
For $c=e^{2\pi i\theta}\in S^1$ and any positive integer $n$, $|P_{n,c}(c)|\geq1$ if and only if
\[\cos(2\pi\theta(n-1))\geq-\frac{1}{2},\]
where equality holds if and only if $|P_{n,c}(c)|=1$.
\end{proposition}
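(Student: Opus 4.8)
The plan is to reduce everything to a one-line trigonometric computation of $|P_{n,c}(c)|$. Since $P_{n,c}(c) = c^n + c$ and $|c| = 1$, I would first factor out $c$: writing $c^n + c = c\,(c^{n-1}+1)$ gives
\[
|P_{n,c}(c)| \;=\; |c|\cdot|c^{n-1}+1| \;=\; |c^{n-1}+1|.
\]
Then, using $c^{n-1} = e^{2\pi i(n-1)\theta}$ together with the standard identity $|1+e^{i\phi}|^2 = (1+\cos\phi)^2 + \sin^2\phi = 2 + 2\cos\phi$ (equivalently $1 + e^{i\phi} = 2\cos(\phi/2)\,e^{i\phi/2}$), I obtain
\[
|P_{n,c}(c)|^2 \;=\; 2 + 2\cos\!\bigl(2\pi\theta(n-1)\bigr).
\]

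With this formula in hand, both assertions are immediate. Because $|P_{n,c}(c)|\ge 0$, the inequality $|P_{n,c}(c)|\ge 1$ is equivalent to $|P_{n,c}(c)|^2 \ge 1$, i.e.\ to $2 + 2\cos(2\pi\theta(n-1)) \ge 1$, which rearranges exactly to $\cos(2\pi\theta(n-1)) \ge -\tfrac12$. The equality clause is handled the same way: $|P_{n,c}(c)| = 1 \iff |P_{n,c}(c)|^2 = 1 \iff \cos(2\pi\theta(n-1)) = -\tfrac12$.

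There is essentially no obstacle here; the only points needing a moment's care are that passing between $|P_{n,c}(c)|$ and its square preserves each inequality precisely because both sides are nonnegative, and that the equality case should be tracked within the same chain of equivalences rather than argued separately. Accordingly, I would simply display the computation of $|P_{n,c}(c)|^2$ as a single chain of equalities and then read off both conclusions.
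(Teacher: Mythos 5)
Your proposal is correct and follows essentially the same route as the paper: both compute $|P_{n,c}(c)|^2 = 2 + 2\cos(2\pi\theta(n-1))$ and read off the equivalence (the paper expands $c^n+c$ into real and imaginary parts and applies the cosine addition formula, while you factor out $c$ first, but this is only a cosmetic difference). Your version is if anything slightly cleaner in making the chain of equivalences, including the equality case, explicit in both directions.
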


\begin{proof}
Note first that for $c=e^{2\pi i\theta}$, we have $P_{n,c}(c)=(e^{2\pi i \theta})^{n}+e^{2\pi i \theta}$, so
\begin{eqnarray*}
P_{n,c}(c)&=&\cos(2\pi\theta n)+i \sin(2\pi\theta n)+ \cos(2\pi\theta) + i \sin(2\pi\theta)\\
&=&\cos(2\pi\theta n)+\cos(2\pi\theta)+i( \sin(2\pi\theta n)+ \sin(2\pi\theta)).
\end{eqnarray*}	
If $P_{n,c}(c)\geq1$, then
\begin{eqnarray*}
1&\leq&(\cos(2\pi\theta n)+\cos(2\pi\theta))^{2}+( \sin(2\pi\theta n)+ \sin(2\pi\theta))^{2}\\
&=&2\cos(2\pi\theta n)\cos(2\pi\theta)+2\sin(2\pi\theta n)\sin(2\pi\theta)+2\\
&=&2\cos(2\pi\theta(n-1))+2
\end{eqnarray*}
from which the result follows.
\end{proof}

Experimentation indicates that $P_{n,c}(c)$ being inside (or outside) $S^1$ very consistently dictates that $c\in\mathcal M_n$ (or $c\notin\mathcal M_n$).  See Figure \ref{Julias}.  Then the condition on $P_{n,c}(c)$ from Proposition \ref{formula} can be used to very consistently predict the structure of $K(P_{n,c})$, which Proposition \ref{formula} also suggests is periodic with respect to $n$. This will be made precise (with quantifiers) in Proposition \ref{trap} below.

More efficient experimentation with checking whether the orbit of 0 stays bounded clearly present this periodic (with respect to $n$) structure for $K(P_{n,c})$ when $c$ is a rational angle on $S^1$. 
Figure \ref{Stars} shows powers $421\leq n\leq450$ and $c=e^{\pi ip/q}\in S^1$ where $q=15$ and $p$ is an integer with $1\leq p\leq 30$.  A star indicates the Julia set $J(P_{n,c})$ is connected.  There is, however, an inconsistency when the orbit of 0 remains on $S^1$. Note that the situation in which $P_{n,c}(c)\in S^1$ corresponds to having $\cos(2\pi\theta(n-1))=-1/2$.  This can be seen in Figure \ref{Stars} for $n=426$ and $2\theta=26/15$ and $2\theta=28/15$.  The program that generated this data can provide a similar table for any equally distributed set of angles and any consecutive set of iterates.

\begin{figure}
\centering
\scalebox{.8}{\includegraphics{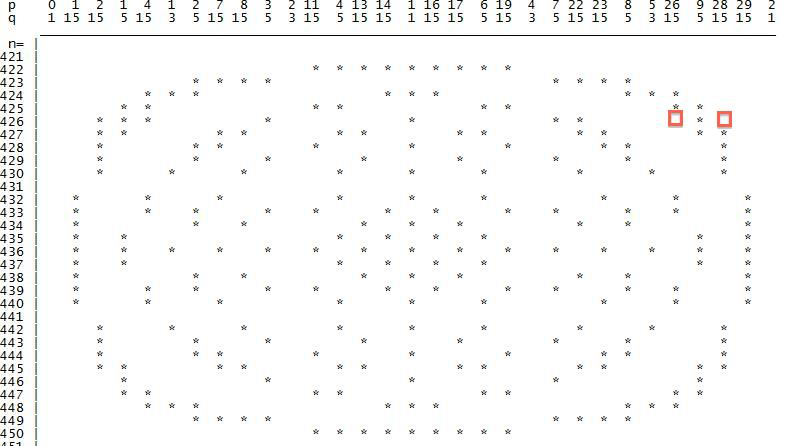}}
\caption{A star indicated $J(P_{n,c})$ is connected, where $c=e^{\pi ip/q}$}
\label{Stars}
\end{figure}

This experimentation yields an intuition that is supported further by another result from \cite{boyd}:
\begin{theoremb2}
Under the Hausdorff metric $d_{\mathcal H}$ in $CS(\hat{\mathbb C})$,
\begin{equation*}
\lim_{n\rightarrow\infty}M(P_{n,c})=\overline{\mathbb D}.
\end{equation*}
\end{theoremb2}
For a fixed $c\in S^1$, as $n$ increases, $c$ will fall into and out of $\mathcal M_n$.  See Figure \ref{Mandelbrot}.  Thus, Proposition \ref{formula} provides nice visual evidence that this is truly periodic behavior.  The Multibrot sets in Figure \ref{Mandelbrot} are in logarithmic coordinates, so the horizontal axis is the real values $-1\leq\theta\leq1$, where $c=e^{2\pi i\theta}$. We are using logarithmic coords since we are interested in the angle $\theta$.

\begin{figure}
\centering
\scalebox{.66}{\includegraphics{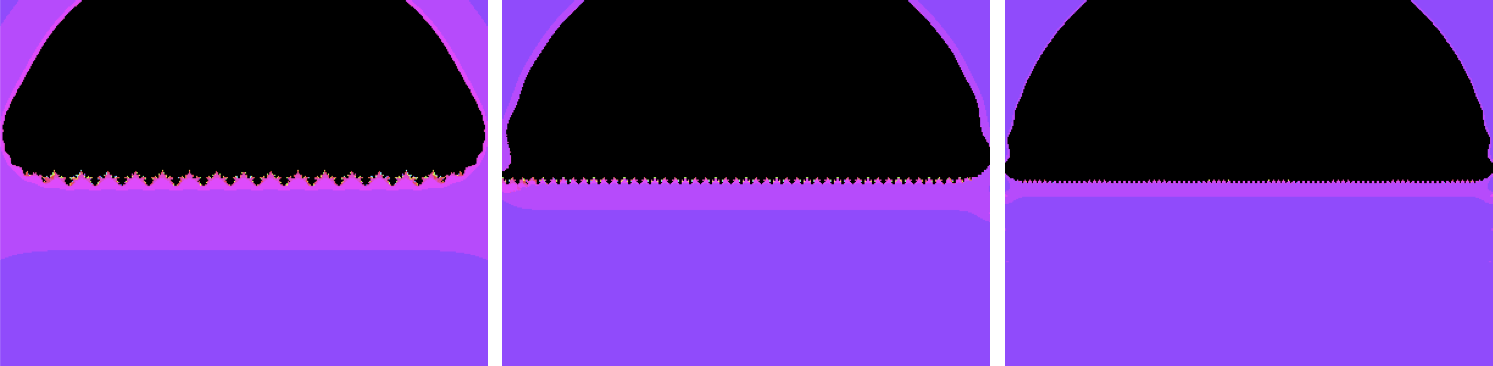}}
\caption{$\mathcal M_n$, where $c=e^{2\pi i\theta}$, $\theta\in\mathbb C$, and $n=10,25,50$.  Almost all fixed Re$\theta$, falls into and out of $\mathcal M_n$ as $n$ increases.}
\label{Mandelbrot}
\end{figure}

It remains an open question what happens for parameters with angles $\theta=\frac{3q\pm1}{3(6p-1)}$ for $p\in\mathbb N$ and $q\in\mathbb Z$.  We prove in Proposition \ref{fixed} that the parameters corresponding to these angles force $P_{n,c}(c)$ to be a fixed point on $S^1$.  In this case, the critical orbit is clearly bounded, so we know the filled Julia set $K(P_{n,c})$ must be connected.  See Figure \ref{boundary}.  However, the behavior of the boundary $J(P_{n,c})$ is extremely complicated, as in the left-most image in Figure \ref{boundary}.

\begin{figure}
\centering
\scalebox{.27}{\includegraphics{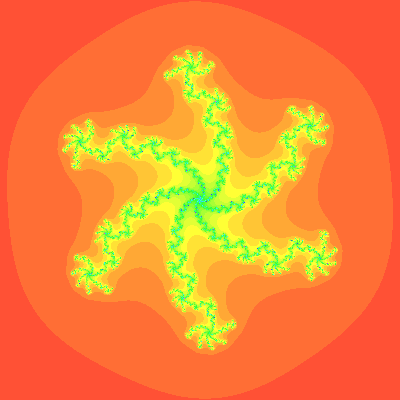}}
\scalebox{.27}{\includegraphics{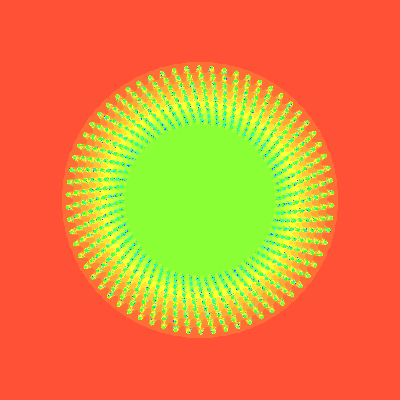}}
\scalebox{.27}{\includegraphics{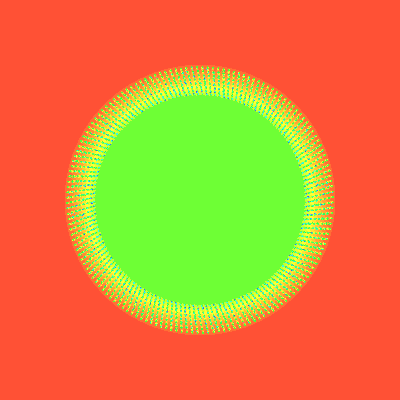}}
\scalebox{.31}{\includegraphics{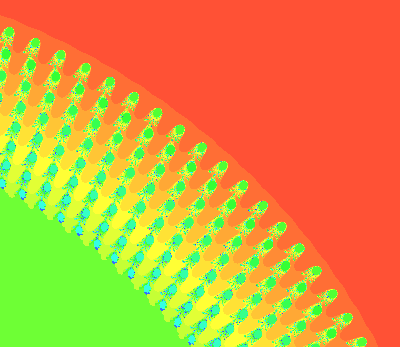}}
\caption{From left to right: $K(P_{n,c})$ for $c=e^{2\pi i/15}$ and $n=6,66,156$.  The far left image is a closer look at the boundary when $n=165$}
\label{boundary}
\end{figure}

\pagebreak

\section{Proof of Theorem \ref{main}}\label{PROOF}

We now prove that $P_{n,c}(c)\notin S^1$ does allow us to determine whether $c\in \mathcal M_n$.

\begin{proposition}\label{trap}
Let $c\in S^1$. For any $\epsilon > 0$ there exists $N > 0$ so that for all $n\geq N$ one has:
\begin{itemize}
\item[1.] if $|P_{n,c}(c)|<1-\epsilon$, then $\mathbb D_{1-\epsilon} \subset K(P_{n,c})$.
\item[2.] if $|P_{n,c}(c)|>1+\epsilon$, then $\mathbb D_{1-\epsilon} \subset\mathbb C \setminus K(P_{n,c})$.
\end{itemize}
\end{proposition}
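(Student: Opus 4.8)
The plan is to exploit the fact that for $n$ large, the map $P_{n,c}$ on the annulus $\mathbb{A}(1-\epsilon, 1+\epsilon)$ behaves like a very high-degree covering, so that the image of this annulus wraps many times around a large range of moduli. The key geometric fact is this: if $z$ lies in the annulus $\mathbb{A}(1-\epsilon/2, 1+\epsilon/2)$, then $|z^n|$ ranges over $[(1-\epsilon/2)^n, (1+\epsilon/2)^n]$, which for large $n$ is a huge interval containing everything from nearly $0$ to nearly $\infty$. So $P_{n,c}$ maps the boundary circles of the annulus to curves of modulus wildly larger and wildly smaller than $1$, while the behavior near modulus $1$ is governed precisely by where $P_{n,c}(c)$ — the image of the critical value's worth of data — sits. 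The reason $P_{n,c}(c)$ is the relevant quantity is that $0$ is the only finite critical point of $P_{n,c}$, so $c = P_{n,c}(0)$ is the critical value, and $P_{n,c}(c)$ is its image; whether the critical orbit escapes or stays bounded is exactly what decides connectivity, and this in turn is controlled by a trapping/repelling region argument.

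Here is the order of steps I would carry out. First, fix $\epsilon > 0$; without loss of generality take $\epsilon$ small. Choose $N$ large enough that $(1+\epsilon/2)^N > 2 + \epsilon$ and $(1-\epsilon/2)^N$ times any bounded factor is as small as needed; more precisely, so that $P_{n,c}$ maps the circle $|z| = 1+\epsilon/2$ entirely outside $\overline{\mathbb{D}}_{1+\epsilon}$ (since $|z^n + c| \geq |z|^n - 1 \geq (1+\epsilon/2)^n - 1$), and maps the circle $|z| = 1 - \epsilon/2$ so that $|z^n|$ is tiny, hence $P_{n,c}$ sends that circle into a small neighborhood of $c$, which lies on $S^1$.

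Second, in Case 1, suppose $|P_{n,c}(c)| < 1 - \epsilon$. I would argue that $\mathbb{D}_{1-\epsilon}$ is forward-invariant, or is absorbed into a forward-invariant region, under $P_{n,c}$: one shows $P_{n,c}(\overline{\mathbb{D}}_{1-\epsilon}) \subseteq \overline{\mathbb{D}}_{1-\epsilon}$ fails directly (since $|c|=1$), so instead one runs the argument through the second iterate and the location of $P_{n,c}(c)$. The honest mechanism is: the only obstruction to $\overline{\mathbb{D}}_{1-\epsilon/2}$-type invariance is the critical value $c$ on the boundary circle, but its image $P_{n,c}(c)$ lands strictly inside $\mathbb{D}_{1-\epsilon}$, so the critical orbit is trapped in a compact subset of the interior; then $\mathbb{D}_{1-\epsilon}$ contains no escaping points and lies in $K(P_{n,c})$, using that $K$ is full (complement of the basin of $\infty$) together with a maximum-modulus / preimage argument on the annulus. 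For Case 2, suppose $|P_{n,c}(c)| > 1 + \epsilon$: then the critical value's image is already in the region from which $|P_{n,c}^k(c)| \to \infty$ (since $|z| > 1+\epsilon$ and $n$ large forces $|z^n + c| \geq (1+\epsilon)^n - 1 > |z|$, giving escape), so $0 \notin K(P_{n,c})$ and $K(P_{n,c})$ is totally disconnected. One then shows more: every point of $\mathbb{D}_{1-\epsilon}$ escapes, because the preimage structure of the escaping annulus $\mathbb{A}(1+\epsilon, \infty)$ under the high-degree map $P_{n,c}$ sweeps across all of $\mathbb{D}_{1-\epsilon}$ — concretely, $P_{n,c}^{-1}(\{|w| > 1+\epsilon\})$ contains the full disk $\mathbb{D}_{1-\epsilon}$ once $n$ is large, since $|z^n + c| \geq (1+\epsilon)^n - 1$ on... no: rather one uses that $P_{n,c}$ maps a neighborhood of $0$ onto a neighborhood of $c$, and iterates, tracking that once the critical orbit escapes, the corresponding "channels" open up and $\mathbb{D}_{1-\epsilon} \cap K(P_{n,c}) = \emptyset$.

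The main obstacle I anticipate is Case 2 — proving the stronger statement that \emph{all} of $\mathbb{D}_{1-\epsilon}$ is outside $K(P_{n,c})$, not merely that $K(P_{n,c})$ is totally disconnected (which only gives that $K(P_{n,c})$ has empty interior, a much weaker conclusion). The resolution should be to use Böttcher coordinates / the Green's function: when $0$ escapes, $K(P_{n,c})$ is contained in a small neighborhood of $S^1$ whose "thickness" is controlled by how fast the critical orbit escapes, and for $n$ large with $|P_{n,c}(c)| > 1+\epsilon$ this neighborhood is thin enough to miss $\mathbb{D}_{1-\epsilon}$ entirely; quantitatively one estimates the escape rate $|P_{n,c}^k(z)|$ from below on all of $\overline{\mathbb{D}}_{1-\epsilon}$ using that the first iterate lands near $c \in S^1$ and then near $P_{n,c}(c)$ which has modulus $> 1+\epsilon$, after which escape is geometric. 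In Case 1 the analogous (easier) point is that the filled Julia set contains the closed disk because no point of that disk can escape — the second iterate of every point in $\overline{\mathbb{D}}_{1-\epsilon}$ stays in $\overline{\mathbb{D}}_{1-\epsilon}$ once $N$ is chosen so that $(1-\epsilon)^n + 1 < $ (a modulus mapping back inside), which is where the location of $P_{n,c}(c)$ enters.
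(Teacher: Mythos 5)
Your proposal follows essentially the same route as the paper: the decisive fact in both is that for $|z|\le 1-\epsilon/2$ the second iterate satisfies $\left|P^2_{n,c}(z)-P_{n,c}(c)\right|\le (1+|z|^n)^n-1\rightarrow 0$, so $P^2_{n,c}$ is nearly the constant $P_{n,c}(c)$ on the disk, which yields forward invariance of the disk under $P^2_{n,c}$ in Case 1 and, combined with $K(P_{n,c})\subset\mathbb D_{1+\epsilon/2}$ (the paper's Lemma \ref{bound}, your Green's-function/escape step), expulsion of the entire disk in Case 2. Once the abandoned false starts in your sketch are discarded, your final formulation of both cases is the paper's argument.
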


Noting that $0\in\mathbb D_{1-\epsilon}$, it follows immediately from Propositions \ref{formula} and \ref{trap} that the orbit of 0 is bounded (or not) depending respectively on whether $P_{n,c}(c)$ is inside $\mathbb D_{1-\epsilon}$ (or outside $\mathbb D_{1+\epsilon}$).  That is,


\begin{corollary}\label{trap2}
For all $\epsilon>0$, there is an $N$ such that for any $n\geq N$,
\begin{itemize}
\item[1.] if $\cos(2\pi \theta (n-1)) < -1/2-\epsilon/2$, then $K(P_{n,c})$ is connected and
\item[2.] if $\cos(2\pi \theta(n-1)) > -1/2+\epsilon/2$, then $K(P_{n,c})$ is totally disconnected and $K(P_{n,c})=J(P_{n,c})$.
\end{itemize}
\end{corollary}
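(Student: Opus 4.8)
The plan is to deduce Corollary~\ref{trap2} from Proposition~\ref{trap} by rewriting the cosine condition as a condition on $|P_{n,c}(c)|$, using the identity $|P_{n,c}(c)|^{2}=2+2\cos(2\pi\theta(n-1))$ that already appears in the proof of Proposition~\ref{formula}, and then invoking the standard fact (see \cite{beardon,milnor}) that, because $P_{n,c}$ is uni-critical, $K(P_{n,c})$ is connected when the orbit of $0$ is bounded and totally disconnected otherwise.

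First I would fix $\epsilon>0$. Since the hypothesis in each item only becomes weaker (satisfied by more values of $n$) as $\epsilon$ decreases, it suffices to prove the statement for small $\epsilon$, so we may assume $0<\epsilon<1$. Put $\epsilon'=\epsilon/4$ and let $N$ be the constant provided by Proposition~\ref{trap} for this $\epsilon'$ (the same $N$ will serve for both items). Now suppose $n\geq N$. If $\cos(2\pi\theta(n-1))<-\frac12-\frac{\epsilon}{2}$, then
\[|P_{n,c}(c)|^{2}=2+2\cos(2\pi\theta(n-1))<1-\epsilon<\left(1-\tfrac{\epsilon}{4}\right)^{2},\]
so $|P_{n,c}(c)|<1-\epsilon'$ and Proposition~\ref{trap}(1) gives $\mathbb D_{1-\epsilon'}\subset K(P_{n,c})$. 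If instead $\cos(2\pi\theta(n-1))>-\frac12+\frac{\epsilon}{2}$, then
\[|P_{n,c}(c)|^{2}=2+2\cos(2\pi\theta(n-1))>1+\epsilon>\left(1+\tfrac{\epsilon}{4}\right)^{2},\]
the last inequality holding because $\epsilon<8$, so $|P_{n,c}(c)|>1+\epsilon'$ and Proposition~\ref{trap}(2) gives $\mathbb D_{1-\epsilon'}\subset\mathbb C\setminus K(P_{n,c})$. Both quadratic inequalities are elementary and use only $0<\epsilon<1$.

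To finish, note that $0\in\mathbb D_{1-\epsilon'}$. In the first case this yields $0\in K(P_{n,c})$, so the forward orbit of $0$ stays bounded and $K(P_{n,c})$ is connected. In the second case $0\notin K(P_{n,c})$, so the orbit of $0$ escapes and $K(P_{n,c})$ is totally disconnected; such a filled Julia set has empty interior and therefore coincides with its boundary, so $K(P_{n,c})=J(P_{n,c})$.

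I do not expect a genuine difficulty here: all the real content already sits in Propositions~\ref{formula} and~\ref{trap}. The only step requiring a little care is the choice of the auxiliary radius $\epsilon'$: converting a bound on $|P_{n,c}(c)|^{2}$ into a bound on $|P_{n,c}(c)|$ introduces a square root, so feeding $\epsilon$ itself into Proposition~\ref{trap} is not quite enough; one must shrink it, and any fixed fraction such as $\epsilon/4$ that absorbs the square-root loss in both directions works.
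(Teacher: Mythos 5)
Your proof is correct and follows essentially the same route the paper intends: the paper derives the corollary directly from Propositions \ref{formula} and \ref{trap} together with the observation that $0\in\mathbb D_{1-\epsilon}$, which is exactly your argument. Your extra care in choosing $\epsilon'=\epsilon/4$ to absorb the square-root loss when passing from $|P_{n,c}(c)|^2=2+2\cos(2\pi\theta(n-1))$ to $|P_{n,c}(c)|$ is a detail the paper glosses over, and it is handled correctly.
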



\begin{proof}[Proof of Proposition \ref{trap}]
Fix $c\in S^1$. Let $\epsilon>0$ and $r_n:=|P^2_{n,c}(0)|=|c^n+c|$.  Observe
\begin{eqnarray*}
\left|P^2_{n,c}(z)\right|\ =\ |(z^n + c)^n + c|&=&\left|c^n + c + \sum_{k=1}^n \binom{n}{k} (z^n)^k c^{n-k}\right|\\
&\leq& \left|c^n + c\right| + \sum_{k=1}^n \binom{n}{k} |z|^{n k}\ =\ r_n + (1+|z|^n)^n - 1.
\end{eqnarray*} 
Then $|P^2_{n,c}(z)|\leq|z|$ when $ r_n+(1+|z|^n)^n-1<|z|$.  That is, for any $\eta\in(0,1)$, if
\begin{eqnarray}\label{inequality1}
r_n&\leq&\eta+1-(1+\eta^n)^n,
\end{eqnarray}
then the disk $\mathbb D_{\eta}$ is forward invariant under $P^2_{n,c}$. Note that $(1+\eta^n)^n>1$ and for fixed $\eta$, $(1+\eta^n)^n\rightarrow1$ as $n\rightarrow\infty$.  Fix $\eta=1-\epsilon/2$, so there is a positive integer $N$ such that for all $n\geq N$,
\begin{equation*}
(1+\eta^n)^n-1<\frac{\epsilon}{2}.
\end{equation*}
Thus, for any $n\geq N$ such that $r_n<1-\epsilon$, 
\begin{equation*}
r_n<\eta-\frac{\epsilon}{2}<\eta+1-(1+\eta^n)^n,
\end{equation*}
so, $\mathbb D_{1-\epsilon}\subset\mathbb D_{\eta}$ is forward invariant under $P^2_{n,c}$.  This implies that the orbit of any point in $\mathbb D_{1-\epsilon}$ must be bounded in a disk of radius $\eta^n+1$, so we have $\mathbb D_{1-\epsilon}\subset K(P_{n,c})$.

On the other hand, note that
\begin{eqnarray*}
\left|P^2_{n,c}(z)\right|\ =\ |(z^n + c)^n + c|
&\geq& \left|\left|c^n + c\right| - \sum_{k=1}^n \binom{n}{k} |z|^{n k}\right|\ =\ \left|r_n - (1+|z|^n)^n + 1\right|.
\end{eqnarray*} 
Again, fix $\eta=1-\epsilon/2$, so there is an $N$ such that for any $n\geq N$, if $r_n>1+\epsilon$ and $|z|<1-\epsilon/2$, then
\begin{equation*}
(1+|z|^n)^n - 1<(1+\eta^n)^n-1<\frac{\epsilon}{2}.  
\end{equation*}
That is, for $n\geq N$ and $z\in\mathbb D_{\eta}$,
\begin{equation*}
|P^2_{n,c}(z)|\geq |r_n - (1+|z|^n)^n + 1|\geq1+\frac{\epsilon}{2}.
\end{equation*}
By Lemma \ref{bound}, we can also choose $N$ large enough that $K(P_{n,c})\subset\mathbb D_{1+\epsilon/2}$ as well.  Then for any $n>N$ and $z\in\mathbb D_{\eta}$, if $|P_{n,c}(c)|=r_n<1+\epsilon$, then $P^2_{n,c}(z)\notin K(P_{n,c})$.  It follows that $z\notin K(P_{n,c})$, so $\mathbb D_{\eta}\subset\mathbb C\backslash K(P_{n,c})$. 
\end{proof}

What remains is to examine $c\in S^1$ such that $P_{n,c}(c)\in S^1$ as well.  This case is simpler and occurs less frequently than one might expect.

\begin{proposition}\label{fixed}
Let $c=e^{2\pi i\theta}$ and $P_{n,c}(z)=z^n+c$.  
Then $P_{n,c}^2(c)\in S^1$ if and only if $P_{nc}(c)$ is a fixed point, in which case, $(n,\theta)\in N$, where
\[N:=\left\{(n,\theta)\in\mathbb N\times\mathbb R\mid n=6p, \theta=\frac{3q\pm1}{3(6p-1)},\mbox{ where }p\in\mathbb N\mbox{ and }q\in\mathbb Z\right\}.\]
\end{proposition}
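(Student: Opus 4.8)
The plan is to reduce the condition $P_{n,c}^2(c) \in S^1$ to a trigonometric equation, solve it explicitly, and then check that the resulting solutions are exactly the fixed-point case. Write $w = P_{n,c}(c) = c^n + c$. By Proposition \ref{formula}, the condition $|w| = 1$ is equivalent to $\cos(2\pi\theta(n-1)) = -1/2$, i.e. $\theta(n-1) \equiv \pm 1/3 \pmod 1$. When this holds, $w$ itself lies on $S^1$, so write $w = e^{2\pi i \phi}$ for some $\phi$; one then needs the further condition $|w^n + c| = 1$, which by the same computation as in Proposition \ref{formula} (applied to the pair $w, c$ rather than $c^n, c$, noting $P_{n,c}(w) = w^n + c$) is equivalent to $\cos\big(2\pi(n\phi - \theta)\big) = -1/2$.

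First I would pin down $\phi$ in terms of $\theta$ and $n$. Since $c^n + c = e^{2\pi i\theta}(e^{2\pi i\theta(n-1)} + 1)$ and $e^{2\pi i\theta(n-1)} = e^{\pm 2\pi i/3}$ (from the first condition), the factor $e^{2\pi i\theta(n-1)} + 1 = 1 + e^{\pm 2\pi i/3}$ equals $e^{\pm \pi i/3}$ (it has modulus $1$ and argument $\pm \pi/6 \cdot 2 = \pm\pi/3$, since $1 + e^{i\alpha} = 2\cos(\alpha/2) e^{i\alpha/2}$ and $2\cos(\pm\pi/3) = 1$). Hence $w = e^{2\pi i(\theta \pm 1/6)}$, so $\phi = \theta \pm 1/6 \pmod 1$. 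Substituting into the second condition gives $\cos\big(2\pi(n\theta \pm n/6 - \theta)\big) = -1/2$, i.e.
\[
\theta(n-1) \pm \frac{n}{6} \equiv \pm \frac{1}{3} \pmod 1,
\]
with the two $\pm$ signs independent a priori. Using the first condition $\theta(n-1) \equiv \pm 1/3 \pmod 1$ again to substitute, this becomes $\pm 1/3 \pm n/6 \equiv \pm 1/3 \pmod 1$, which forces $n/6 \equiv 0 \pmod 1$ when the two "outer" signs agree, or $\pm 2/3 \pm n/6 \equiv 0 \pmod 1$ otherwise; a short case analysis on signs shows the only way to satisfy this for an integer $n$ is $6 \mid n$, say $n = 6p$. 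Then the first condition reads $\theta(6p - 1) \equiv \pm 1/3 \pmod 1$, i.e. $\theta = \frac{\pm 1/3 + q}{6p-1} = \frac{3q \pm 1}{3(6p-1)}$ for $q \in \mathbb Z$, which is exactly the set $N$.

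For the equivalence with "$P_{n,c}(c)$ is a fixed point," I would argue as follows. If $w$ is a fixed point of $P_{n,c}$, then $P_{n,c}^2(c) = P_{n,c}(w) = w$, and $w \in S^1$ by the first condition, so certainly $P_{n,c}^2(c) \in S^1$. Conversely, suppose $(n,\theta) \in N$; I would show directly that $w = c^n + c$ satisfies $w^n + c = w$, i.e. $w^n = w - c$. From the computation above, $w = e^{2\pi i(\theta \pm 1/6)}$ and $w - c = e^{2\pi i\theta}(e^{\pm\pi i/3} - 1) = e^{2\pi i\theta} e^{\pm 2\pi i/3}\cdot(\pm i)\cdots$ — more cleanly, $e^{\pm i\pi/3} - 1 = e^{\pm 2\pi i/3}$ (modulus $1$, argument $\pm 2\pi/3$, since $e^{i\beta} - 1 = 2i\sin(\beta/2)e^{i\beta/2}$ gives argument $\beta/2 + \pi/2 = \pi/6 + \pi/2 = 2\pi/3$ for $\beta = \pi/3$, up to sign), so $w - c = e^{2\pi i(\theta \pm 1/3)}$. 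On the other hand $w^n = e^{2\pi i n(\theta \pm 1/6)} = e^{2\pi i(n\theta \pm p)} = e^{2\pi i n\theta}$ since $n = 6p$; and $n\theta = 6p\theta = (6p-1)\theta + \theta \equiv \pm 1/3 + \theta \pmod 1$ by the first condition. Thus $w^n = e^{2\pi i(\theta \pm 1/3)} = w - c$, so $w$ is fixed. Keeping the $\pm$ bookkeeping consistent across these three computations is the only delicate point.

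The main obstacle I anticipate is purely bookkeeping: there are several independent $\pm$ choices (from the two applications of Proposition \ref{formula}, and from $q$ ranging over $\mathbb Z$), and I must check that the sign combinations that survive are precisely those parametrized by the single $\pm$ in $\frac{3q\pm1}{3(6p-1)}$ — i.e. that no spurious families appear and none are missed. I would handle this by fixing the sign in the first condition, carrying it through symbolically, and observing that flipping it amounts to $q \mapsto -q-(\text{shift})$, so both choices are already captured by letting $q$ range over all of $\mathbb Z$. The underlying identities $1 + e^{\pm i\pi/3} = e^{\pm i\pi/3}$ and $e^{\pm i\pi/3} - 1 = e^{\pm 2i\pi/3}$ (equivalently, the $60$–$60$–$60$ and $120$ geometry of the relevant unit vectors) are elementary and I would verify them once at the start.
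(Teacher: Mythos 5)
Your reduction of the two conditions to $\theta(n-1)\equiv\pm1/3\pmod 1$ and $\theta(n-1)\pm n/6\equiv\pm1/3\pmod 1$ is correct, but the concluding claim of the case analysis --- that the only way an integer $n$ can satisfy these is $6\mid n$ --- is false, and this is where the argument breaks. When the two outer signs disagree, the condition $\pm2/3\pm n/6\equiv 0\pmod 1$ is satisfied by $n\equiv 2\pmod 6$, and this case is genuinely realizable: take $n=2$ and $\theta=2/3$, so $c=e^{4\pi i/3}$; then $P_{2,c}(c)=c^2+c=-1\in S^1$ and $P_{2,c}^2(c)=1+c=e^{-i\pi/3}\in S^1$, yet $-1$ is not a fixed point since $P_{2,c}(-1)=1+c\neq-1$ (a large-$n$ instance is $n=8$, $\theta=2/21$). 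In the paper's notation the mixed-sign case is exactly the orbit pattern $c\mapsto b\mapsto a$ (or $c\mapsto a\mapsto b$), where $a=e^{2\pi i(\theta+1/6)}$ and $b=e^{2\pi i(\theta-1/6)}$ are the only points of $S^1$ that $P_{n,c}$ can send a point of $S^1$ to; two iterates on the circle simply do not force $P_{n,c}(c)$ to be fixed.

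The paper's proof sidesteps this by (implicitly) working with the stronger condition that the entire critical orbit stays on $S^1$: it shows every iterate after the first lies in $\{a,b\}$, writes the congruences for all four transitions $a\mapsto a$, $a\mapsto b$, $b\mapsto a$, $b\mapsto b$, and eliminates the two-cycle and the transient patterns by pairing the equation for, say, $b\mapsto a$ with the one for the \emph{next} step $P_{n,c}(a)\in\{a,b\}$ --- i.e., it uses the third iterate. Only $P_{n,c}(c)=P_{n,c}(a)=a$ and $P_{n,c}(c)=P_{n,c}(b)=b$ survive, yielding $n=6p$ and $\theta=\frac{3q\pm1}{3(6p-1)}$. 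To repair your argument you must either strengthen the hypothesis to ``$P_{n,c}^k(c)\in S^1$ for all $k$'' (which is how the proposition is actually invoked later) and carry the analysis one more iterate, or accept that the literal two-iterate condition admits the extra family $n\equiv2\pmod 6$. Your converse direction --- that $(n,\theta)\in N$ forces $w=c^n+c$ to satisfy $w^n=w-c$ and hence be fixed --- is correct as written.
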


\begin{proof}
Since $|c|=1$, note that the set $S^1-c:=\{z-c\mid z\in S^1\}$ is a circle centered at $-c\in S$, so it intersects $S^1$ in exactly two points, call them $a_0$ and $b_0$.  By construction, $a_0+c, b_0+c\in S^1$, so define
\begin{eqnarray*}
a&:=&a_0+c\\
b&:=&b_0+c.
\end{eqnarray*}
Moreover, the points $\{c, a, a_0, -c, b_0, b\}$ form a hexagon inscribed in $S^1$ whose sides are all length one.  Thus, we have
\begin{eqnarray*}
a&=&e^{2\pi i(\theta+1/6)}\\
a_0&=&e^{2\pi i(\theta+1/3)}\\
b_0&=&e^{2\pi i(\theta-1/3)}\\
b&=&e^{2\pi i(\theta-1/6)}.
\end{eqnarray*}

\begin{figure}
\label{circle}
\scalebox{.5}{\includegraphics{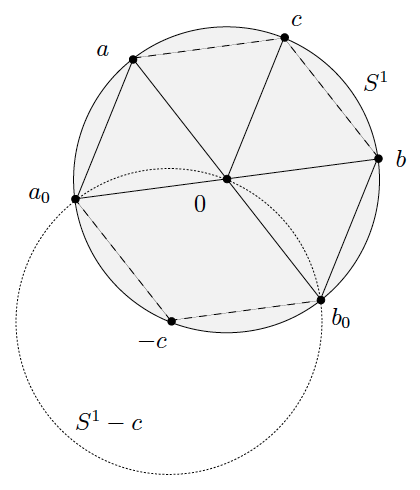}}
\caption{$P_{n,c}(c)$ is on the circle if and only if $c^n=a_0$ or $a^n=b_0$.}
\end{figure}

See Figure \ref{circle}.  For any $z\in S^1$, we have that $P_{n,c}(z)=z^n+c$ and $z^n\in S^1$, so $P_{n,c}(z)\in S^1$ if and only if
\begin{equation*}
z^n\in(S^1-c)\cap S^1\ =\ \{a_0,b_0\};
\end{equation*}
that is, $P_{n,c}(z)\in\{a,b\}$.  It follows that $|P_{n,c}^k(c)|=1$ for all $k\geq0$ if and only if one of the following is true: $a$ is a fixed point, $b$ is a fixed point, or $a$ and $b$ are a two-cycle.

Assume that $P_{n,c}(c)\in S^1$.  First observe that $P_{n,c}(c)\in\{a, b\}$, so
\begin{eqnarray*}
P_{n,c}(c)=e^{2\pi i(\theta\pm1/6)}.
\end{eqnarray*}
Since $P_{n,c}(c)=c^n+c=e^{2\pi i\theta n}+e^{2\pi i\theta}$, it follows that
\begin{eqnarray*}
e^{2\pi i\theta n}=e^{2\pi i(\theta\pm1/6)}-e^{2\pi i\theta}=e^{2\pi i(\theta\pm1/3)}.
\end{eqnarray*}
Thus, $\theta n=\theta\pm1/3+q$ for some integer $q$, so
\begin{eqnarray}\label{firstit}
\theta(n-1)&=&q+\frac{1}{3}\mbox{ if }P_{n,c}(c)=a\mbox{ and}\label{firstitp}\\
\theta(n-1)&=&q-\frac{1}{3}\mbox{ if }P_{n,c}(c)=b\label{firstitm}.
\end{eqnarray}
Proceeding to the next iterate, note that $P_{n,c}^2(c)\in\{a, b\}$ as well, so we need only examine $P_{n,c}(a)$ and $P_{n,c}(b)$.  Since $P_{n,c}(a), P_{n,c}(b)\in\{a,b\}$, it must be for some integer $p_0$,
\begin{eqnarray*}
P_{n,c}\left(e^{2\pi i(\theta\pm1/6)}\right)=e^{2\pi i(\theta\pm1/6)n}+e^{2\pi i\theta}
\in\{a,b\}=\left\{e^{2\pi i(\theta+1/6+p_0)},e^{2\pi i(\theta-1/6+p_0)}\right\}.
\end{eqnarray*}
Then it follows that from the definition of $a$ and $b$ that $e^{2\pi i(\theta\pm1/6+p_0)}\in\{a_0,b_0\}$, so we have  $(\theta\pm1/6)n=\theta\pm1/3+p_0$.
In particular, 
\begin{eqnarray}
(n-1)\theta&=&p_0+\frac{1}{3}-\frac{n}{6},\mbox{ \ if }P_{n,c}(a)=a,\label{atoa}\\
(n-1)\theta&=&p_0-\frac{1}{3}-\frac{n}{6},\mbox{ \ if }P_{n,c}(a)=b,\label{atob}\\
(n-1)\theta&=&p_0+\frac{1}{3}+\frac{n}{6},\mbox{ \ if }P_{n,c}(b)=a,\mbox{ and}\label{btoa}\\
(n-1)\theta&=&p_0-\frac{1}{3}+\frac{n}{6},\mbox{ \ if }P_{n,c}(b)=b.\label{btob}
\end{eqnarray}
If $a$ and $b$ are a two cycle, then equations (\ref{atob}) and (\ref{btoa}) together imply $q\pm1/3=p_0$.  This contradicts the fact that $q$ and $p_0$ are both integers.  A similar contradiction arises from the cases when $P_{n,c}(b)=a$ and $a$ is fixed, or when $P_{n,c}(a)=b$ and $b$ is fixed.

The only remaining possibilities are that $P_{n,c}(c)=P_{n.c}(a)=a$ or $P_{n,c}(c)=P_{n.c}(b)=b$.  Thus, we have shown that $|P_{n,c}^k(c)|=1$ for all $k\geq0$ if and only if for all $k\geq1$, $P_{n,c}^k(c)=a$ or $P_{n,c}^k(c)=b$.

It remains to show that $(n,\theta)\in N$ is an equivalent statement.  Supposing that for all $k\geq1$,  $P_{n,c}^k(c)=a$ or $P_{n,c}^k(c)=b$, we have
\begin{equation*}
q\pm\frac{1}{3}=\theta(n-1)=p_0\pm\frac{1}{3}\mp\frac{n}{6}.
\end{equation*}
From this equation, one can see that $n=6p$, where $p=q-p_0\in\mathbb N$.  Moreover, the equations (\ref{firstitp}) and (\ref{firstitm}) derived from the first iterate of $c$ yield
\begin{equation*}
\theta(n-1)=q\pm\frac{1}{3},
\end{equation*}
so
\begin{equation*}
\theta=\frac{3q\pm1}{3(n-1)}=\frac{3q\pm1}{3(6p-1)}.
\end{equation*}
\end{proof}

The following lemmas are from \cite{boyd}.  The third is a subtle variation, so we include the proof.

\begin{lemma}[Boyd-Schulz]\label{bound}
Let $c\in\mathbb C$.  For any $\epsilon>0$, there is an $N$ such that for all $n\geq N$,
\[K(P_{c,n})\subset\mathbb D_{1+\epsilon}.\]
\end{lemma}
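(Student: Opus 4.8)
The plan is to prove the contrapositive pointwise: I will show that for all sufficiently large $n$ (with the threshold depending only on $\epsilon$ and $|c|$), every point $z$ with $|z|\geq 1+\epsilon$ has an unbounded forward orbit under $P_{n,c}$ and therefore lies outside $K(P_{n,c})$. Since this applies to every such $z$, it yields $K(P_{n,c})\subseteq\mathbb D_{1+\epsilon}$ at once.

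The engine of the argument is an elementary escape estimate. For $|z|=R$ the triangle inequality gives $|P_{n,c}(z)|=|z^n+c|\geq R^n-|c|$. When $R\geq 1+\epsilon$ we may write $R^n=R\cdot R^{n-1}\geq R(1+\epsilon)^{n-1}$, whence $|P_{n,c}(z)|\geq R\bigl((1+\epsilon)^{n-1}-|c|\bigr)$. Choosing $N$ so large that $(1+\epsilon)^{N-1}-|c|\geq 2$, we get for every $n\geq N$ and every $z$ with $|z|\geq 1+\epsilon$ that $|P_{n,c}(z)|\geq 2|z|$. Because $2|z|\geq 2(1+\epsilon)>1+\epsilon$, the image again satisfies the hypothesis of the estimate, so by induction $|P_{n,c}^{k}(z)|\geq 2^{k}|z|\to\infty$. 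Hence $z\notin K(P_{n,c})$, and as $z$ was an arbitrary point of modulus at least $1+\epsilon$ we conclude $K(P_{n,c})\subseteq\{\,w:|w|<1+\epsilon\,\}\subseteq\mathbb D_{1+\epsilon}$; if one prefers a strict inclusion into a closed disk, simply run the same computation with $\epsilon/2$ in place of $\epsilon$.

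I do not anticipate a genuine obstacle here. The one point that must be handled with care is that $N$ depends only on $\epsilon$ and $|c|$ — not on $z$ and not on $n$ — since this is exactly the uniformity (over all large $n$) that distinguishes the statement from the usual fixed-$n$ escape-radius bound for a single polynomial. That uniformity is automatic from the displayed estimate, because $(1+\epsilon)^{n-1}$ is increasing in $n$, so once the inequality $(1+\epsilon)^{n-1}-|c|\geq 2$ holds for $n=N$ it holds for all $n\geq N$.
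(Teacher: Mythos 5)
Your argument is correct and complete. The paper itself does not prove this lemma --- it is quoted from Boyd--Schulz without proof --- so there is no in-text argument to compare against, but your escape-rate computation is exactly the standard route: for $|z|=R\geq 1+\epsilon$ one has $|z^n+c|\geq R^n-|c|\geq R\bigl((1+\epsilon)^{n-1}-|c|\bigr)\geq 2R$ once $(1+\epsilon)^{N-1}-|c|\geq 2$, and since the image again has modulus at least $1+\epsilon$ the estimate iterates to give $|P_{n,c}^k(z)|\geq 2^k|z|\to\infty$. You correctly isolate the one point of substance, namely that $N$ depends only on $\epsilon$ and $|c|$ (uniformly in $n\geq N$ and in $z$), which is what distinguishes this from the usual fixed-degree escape-radius bound; the monotonicity of $(1+\epsilon)^{n-1}$ in $n$ settles that. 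The only cosmetic caveat is whether $\mathbb D_{1+\epsilon}$ is meant as the open or closed disk, and you have already noted that replacing $\epsilon$ by $\epsilon/2$ absorbs any such ambiguity.
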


\begin{lemma}[Boyd-Schulz]
Let $z\in J(P_{n,c})$.  If $\omega$ is an $n$-th root of unity, then $\omega z\in J(P_{n,c})$.
\end{lemma}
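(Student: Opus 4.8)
The plan is to derive this from two standard ingredients: the complete invariance of the Julia set of a polynomial, and the trivial fact that precomposing $P_{n,c}$ with the rotation $z\mapsto\omega z$ changes nothing when $\omega^n=1$. First I would record the computation that for any $\omega$ with $\omega^n=1$ and any $z$,
\[
P_{n,c}(\omega z)\ =\ (\omega z)^n+c\ =\ \omega^n z^n+c\ =\ z^n+c\ =\ P_{n,c}(z),
\]
so that, writing $R_\omega(z):=\omega z$, one has $P_{n,c}\circ R_\omega=P_{n,c}$ as maps of $\hat{\mathbb C}$ (with $R_\omega$ fixing $0$ and $\infty$).

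Next I would invoke the classical complete invariance of the Julia set (see \cite{beardon} or \cite{milnor}): $P_{n,c}(J(P_{n,c}))=J(P_{n,c})=P_{n,c}^{-1}(J(P_{n,c}))$. Given $z\in J(P_{n,c})$, forward invariance gives $P_{n,c}(z)\in J(P_{n,c})$, and the identity above then yields $P_{n,c}(\omega z)=P_{n,c}(z)\in J(P_{n,c})$; hence $\omega z\in P_{n,c}^{-1}(J(P_{n,c}))=J(P_{n,c})$, which is exactly the claim. If one prefers to avoid quoting backward invariance, the same conclusion follows by observing that $R_\omega$ is a homeomorphism of $\hat{\mathbb C}$ and that $P_{n,c}\circ R_\omega=P_{n,c}$ forces the orbits of $w$ and $R_\omega(w)$ to agree from the first iterate onward, so $R_\omega(K(P_{n,c}))=K(P_{n,c})$; taking boundaries then gives $R_\omega(J(P_{n,c}))=R_\omega(\partial K(P_{n,c}))=\partial K(P_{n,c})=J(P_{n,c})$.

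There is essentially no obstacle here beyond citing the correct form of invariance; the lemma is a one-line consequence of the symmetry $\omega^n=1$ of the monomial $z^n$. The only point worth a moment's care is that the identity $P_{n,c}\circ R_\omega=P_{n,c}$ concerns the \emph{first} iterate, so one applies forward invariance once before invoking backward invariance (equivalently, $R_\omega$ commutes past a single application of $P_{n,c}$); nothing about the finer structure of $J(P_{n,c})$ is needed.
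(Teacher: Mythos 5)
Your proof is correct: the identity $P_{n,c}(\omega z)=P_{n,c}(z)$ for $\omega^n=1$ combined with complete invariance of the Julia set (or, equivalently, the symmetry of $K(P_{n,c})$ under $z\mapsto\omega z$ and $J=\partial K$) gives the claim. The paper itself does not prove this lemma but simply cites \cite{boyd}, and your argument is the standard one used there.
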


\begin{lemma}[Boyd-Schulz]\label{ball}
Let $\epsilon>0$ and $c=e^{2\pi i\theta}\in S^1$ such that $\theta\neq\frac{3q\pm1}{3(6p-1)}$ for any $p\in\mathbb N$ and $q\in\mathbb Z$.  There is an $N\geq2$ such that for all $n\geq N$ and for any $e^{i\phi}\in S^1$,
\[B(e^{i\phi},\epsilon)\cap J(P_{n,c})\neq\emptyset.\]
\end{lemma}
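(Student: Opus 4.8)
The plan is to show that the Julia set $J(P_{n,c})$ becomes $\epsilon$-dense in $S^1$ once $n$ is large, for every $c = e^{2\pi i\theta}$ outside the exceptional set. First I would recall the two structural facts already available: Lemma \ref{bound} gives $K(P_{n,c}) \subset \mathbb D_{1+\epsilon'}$ for large $n$, and the Boyd--Schulz rotational symmetry lemma says that $J(P_{n,c})$ is invariant under multiplication by any $n$-th root of unity. The rotational symmetry is the workhorse: if I can locate even a single point $z_0 \in J(P_{n,c})$ with $|z_0| \ge 1 - \epsilon/2$, then its orbit under the group $\mu_n$ of $n$-th roots of unity is an arc-spacing-$2\pi/n$ family of points in $J(P_{n,c})$, all on the circle $|z| = |z_0|$, which is $\epsilon$-close to $S^1$. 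Once $n > 4\pi/\epsilon$, these rotated copies are within $\epsilon$ of every point of $S^1$, giving the conclusion. So the entire lemma reduces to producing one point of $J(P_{n,c})$ of modulus close to $1$.

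To produce such a point I would argue that $J(P_{n,c})$ cannot be contained in any disk $\mathbb D_{1-\epsilon/2}$. Suppose it were. By the maximum principle / the standard description of the filled Julia set as the complement of the basin of infinity, $K(P_{n,c})$ is the union of $J(P_{n,c})$ with the bounded complementary components, so $K(P_{n,c}) \subset \overline{\mathbb D_{1-\epsilon/2}}$ as well. But this forces $P_{n,c}(0) = c$ and indeed the whole orbit of $0$ to lie in $\overline{\mathbb D_{1-\epsilon/2}}$ whenever $K(P_{n,c})$ is connected, which is impossible since $|c| = 1$. When $K(P_{n,c})$ is totally disconnected we have $K(P_{n,c}) = J(P_{n,c})$, and I would instead invoke Corollary \ref{trap2} together with Proposition \ref{fixed}: for $\theta$ off the exceptional set, Proposition \ref{fixed} guarantees $P_{n,c}^2(c) \notin S^1$, so $\cos(2\pi\theta(n-1)) \ne -1/2$; in the connected-$K$ regime Proposition \ref{trap}(1) already puts $\mathbb D_{1-\epsilon}$ inside $K(P_{n,c})$, so its boundary $J(P_{n,c})$ meets every circle of radius between $1-\epsilon$ and the outer radius, in particular has points of modulus $\ge 1 - \epsilon/2$. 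The one genuinely delicate case is the totally disconnected regime, where $K = J$ could a priori be a Cantor set living strictly inside $\mathbb D_{1-\epsilon/2}$; here I would use that $J(P_{n,c})$ always contains the fixed points of $P_{n,c}$, and a fixed point $z^* = (z^*)^n + c$ with $c$ on the unit circle cannot have modulus bounded away from $1$ uniformly — solving $z^* - (z^*)^n = c$ shows that for large $n$ any solution with $|z^*|$ small would force $|z^*| \approx |c| = 1$, a contradiction, so some fixed point has modulus close to $1$.

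The main obstacle, then, is making the totally-disconnected case airtight: ruling out that the entire Julia set shrinks inside a circle of radius $1-\epsilon/2$. The fixed-point argument is the cleanest route, since $J(P_{n,c})$ always contains all $n$ repelling fixed points (the polynomial has degree $n$, and all fixed points of a polynomial lie in $J$ unless attracting/neutral, but the count and the equation $z = z^n + c$ together pin down at least one fixed point of modulus tending to $1$). Once one such fixed point $z^*$ is in hand with $|z^*| \ge 1 - \epsilon/2$ for $n$ large, I apply the rotational symmetry to $z^*$ and finish as above. I would organize the write-up as: (i) reduce to finding one point of $J$ of modulus $\ge 1-\epsilon/2$ via $\mu_n$-symmetry and the $n > 4\pi/\epsilon$ arc-spacing estimate; (ii) split on connected vs.\ totally disconnected $K(P_{n,c})$ using Corollary \ref{trap2}; (iii) in the connected case quote Proposition \ref{trap}(1); (iv) in the disconnected case use the fixed-point equation $z = z^n + c$ to locate a fixed point near $S^1$.
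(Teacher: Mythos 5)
Your proposal is correct in substance and shares the paper's central mechanism --- the invariance of $J(P_{n,c})$ under multiplication by $n$-th roots of unity, which converts a single point of $J$ of modulus near $1$ into an $\epsilon$-net of $S^1$ once $2\pi/n$ is small --- but it produces that single point by a genuinely different route. The paper gets it for free: Proposition \ref{trap} together with Lemma \ref{bound} traps all of $J(P_{n,c})$ inside the annulus $\mathbb A(1-\epsilon/2,1+\epsilon/2)$ for large $n$, so any point of the nonempty Julia set will do; no case analysis and no fixed points are needed. You instead split on connected versus totally disconnected $K(P_{n,c})$ and, in the disconnected case, locate a fixed point of $P_{n,c}$ in $J$ and read off its modulus from $z=z^n+c$: since $1=|c|=|z-z^n|\le|z|+|z|^n$ (with the reverse triangle inequality giving the upper bound), every fixed point has modulus in $(1-\delta,1+\delta)$ for $n$ large. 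That argument is sound and is actually stronger than the use you put it to: it needs no hypothesis on $\theta$ and no case split at all, since ``some fixed point lies in $J$'' is a standard consequence of the holomorphic fixed-point formula (or the Fatou--Shishikura bound on non-repelling cycles for a unicritical polynomial). Two cautions for the write-up. First, the dichotomy you take from Corollary \ref{trap2} is not exhaustive: it says nothing about those $n$ with $-1/2-\epsilon/2\le\cos(2\pi\theta(n-1))\le-1/2+\epsilon/2$, and the hypothesis on $\theta$ only excludes exact equality with $-1/2$, not proximity to it; so you should either run the fixed-point argument uniformly (recommended) or handle the connected case by your orbit-of-$0$ observation ($0\in K$ forces $c=P_{n,c}(0)\in K$, so $J$ cannot sit inside $\mathbb D_{1-\epsilon/2}$) rather than by Proposition \ref{trap}(1), whose hypothesis $|P_{n,c}(c)|<1-\epsilon$ need not hold. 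Second, ``the count \dots pins down at least one fixed point'' in $J$ is not quite right as stated: the degree count alone does not place any fixed point in the Julia set; you must cite the standard fact that a polynomial of degree at least $2$ has a repelling or parabolic fixed point.
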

\begin{proof}
By Proposition \ref{trap}, there is an $N_1$ such that for any $n\geq N_1$, we have $J(P_{n,c})\subset\mathbb A(1-\epsilon/2,1+\epsilon/2)$.  Let $e^{i\phi}\in S^1$ and $\alpha>0$ be the angle so that
\[U:=\{re^{i\tau}\colon r>0,\phi-\alpha<\tau<\phi+\alpha\}\cap\mathbb A(1-\epsilon/2,1+\epsilon/2\}\]
is contained in $B(e^{i\phi},\epsilon)$.  The same $\alpha$ works for each different $\phi$.

For any $n$, let $\omega_n=e^{2\pi i/n}$, and choose $N>N_1$ such that $2\pi/N<\alpha$, noting that $N$ is also independent of $\phi$.  We have $2\pi/n<\alpha$ for any $n\geq N$.

Since $J(P_{n,c})$ is nonempty for any $n$ \cite{milnor}, choose $z_n\in J(P_{n,c})$ for each $n\geq N$. Then for some integer $1\leq j_n\leq n-1$, we have
\[\omega_n^{j_n}z_n\in U\subset B(e^{i\phi},\epsilon).\]
Thus, for all $n\geq N$, $B(e^{i\phi},\epsilon)\cap J(P_{n,c})\neq\emptyset$.
\end{proof}

\begin{proof}[Proof of Theorem \ref{main}]

Fix $c=e^{2\pi i\theta}\in S^1$ and assume $\theta\neq\frac{3q\pm1}{3(6p-1)}$ for any $p\in\mathbb N$ and $q\in\mathbb Z$.  Then by Proposition \ref{fixed}, $|P_{n,c}(c)|\neq1$, and by Proposition \ref{formula}, we have $\cos(2\pi\theta(n-1))\neq-\frac{1}{2}$.  In particular, 
\begin{enumerate}
\item $|P_{n,c}(c)|<1$ when $\cos(2\pi\theta(n-1))<-\frac{1}{2}$, and
\item $|P_{n,c}(c)|>1$ when $\cos(2\pi\theta(n-1))>-\frac{1}{2}$.
\end{enumerate}
Note that $\cos(2\pi\theta(n-1))$ has period $1/\theta$ as a function of $n$.  If $\theta$ is a rational number, then this function takes a finite number of values.  In this case, $|P_{n,c}(c)|$ can be bound away from $S^1$ by a fixed distance for any $n$. Let $\epsilon > 0$ be smaller than this minimum distance.   Then, Proposition \ref{trap} gives that that there is $N > 0$ such that for all $n\geq N$, we have either 
\begin{itemize}
\item[1.] $|P_{n,c}(c)|<1-\epsilon$ and $\mathbb D_{1-\epsilon} \subset K(P_{n,c})$, or
\item[2.] $|P_{n,c}(c)|>1+\epsilon$ and $\mathbb D_{1-\epsilon} \subset\mathbb C \setminus K(P_{n,c})$.
\end{itemize}
Moreover, if we consider $\theta$ as a rational rotation of the circle, the periodic orbit (with respect to $n$) induces intervals on $S^1$ that are permuted by this rotation \cite{KATOK}.  Since $\cos(2\pi\theta(n-1))\neq-\frac{1}{2}$, we must have $n$ and $m$ such that $\cos(2\pi\theta(n-1))\geq-\frac{1}{2}$ and $\cos(2\pi\theta(m-1))\geq-\frac{1}{2}$.  Again, since this rotation is periodic, we can find such $n$ and $m$ for any $N>0$.  Thus, no limit as $n\rightarrow\infty$ can exist for $K(P_{n,c})$.  

Now suppose $\theta$ is irrational. For any sufficiently small $\epsilon > 0$ let $N > 0$ be given by Corollay \ref{trap2}.  Since the values $\cos(2\pi (n-1) \theta)$ are equidistributed in $[-1,1]$ according to $\cos_{\ast}(\mbox{Leb})$ (where Leb is the Lebesgue measure on the circle) \cite{KATOK}, there will be arbitrarily large values of $m,n > N$ such that $\cos(2\pi (n-1) \theta)  < -1/2 -\epsilon$ and $\cos(2\pi(m-1) \theta) > -1/2 + \epsilon$.  In this case $K_{n,c}$ contains the disc $\mathbb D_{1-\epsilon}$ while, $\mathbb D_{1-\epsilon}$ is contained in the complement of $K_{m,c}$.  Thus, no limit as $n\rightarrow\infty$ can exist for $K(P_{n,c})$.  


Having established the claim in Theorem \ref{main} that no limit exists, we move on to prove the claim that if $\theta$ is rational, $\theta\neq0$, and $\theta\neq\frac{3q\pm1}{3(6p-1)}$, then there are subsequences $a_k$ and $b_k$ partitioning $\{n\in\mathbb N\colon n\geq N\}$ such that
\[\displaystyle\lim_{k\rightarrow\infty}K(P_{a_k,c})\ =\ S^1\mbox{\hspace{.25in}and\hspace{.2in}}
\displaystyle\lim_{k\rightarrow\infty}K(P_{b_k,c})\ =\ \overline{\mathbb D}.\]
We know from Proposition \ref{fixed} that  $|P_{n,c}(c)|\neq1$ for any positive integer $n$.  Thus, for any $\epsilon>0$, we can use Proposition \ref{trap} to find an $N\in\mathbb N$ and construct subsequences
\begin{eqnarray*}
A_{\epsilon}&=&\{n\in\mathbb Z_{+}\colon|P_{n,c}(c)|<1-\epsilon\}\mbox{ and}\\
B_{\epsilon}&=&\{n\in\mathbb Z_{+}\colon|P_{n,c}(c)|>1+\epsilon\}
\end{eqnarray*}
such that for any $n\geq N$,  
\begin{enumerate}
\item if $n\in A_{\epsilon}$, then $K(P_{n,c})$ is full and connected, and 
\item if $n\in B_{\epsilon}$, then $K(P_{n,c})=J(P_{nc})$ is totally disconnected.
\end{enumerate}
Moreover, as $\epsilon\rightarrow0$, these two sets partition $\mathbb N$.  

With the structure of $K(P_{n,c})$ consistent in each of the sets $A_{\epsilon}$ and $B_{\epsilon}$, the remainder of the proof very closely follows the proof of Theorem 1.2 in \cite{boyd}.

Let $\epsilon>0$ and $a_k$ the subsequence of $n\in A_{\epsilon}$.  Then $|P_{a_k,c}(c)|<1-\epsilon$, so by Proposition \ref{formula}, there is an $N_1$ such that for any $a_k\geq N_1$, we have $\mathbb D_{1-\epsilon}\subseteq K(P_{a_k,c})$.  By Lemma \ref{bound}, there is an $N_2\geq N_1$ such that for any $a_k\geq N_2$, we have $K(P_{a_k,c})\subseteq\mathbb D_{1+\epsilon}$.  Thus, for any $z\in K(P_{a_k,c})$,
\begin{equation*}
d(z,\overline{\mathbb D})=\inf_{w\in\overline{\mathbb D}}|z-w|<\epsilon.
\end{equation*}
Now let $w\in\overline{\mathbb D}$.  Since $\mathbb D_{1-\epsilon}\subseteq K(P_{a_k,c})\subseteq\mathbb D_{1+\epsilon}$, we have
\begin{equation*}
d(w,K(P_{a_k,c}))=\inf_{z\in K(P_{a_k,c})}|z-w|<\epsilon.
\end{equation*}
If follows that
\begin{eqnarray*}
d_{\mathcal H}(K(P_{a_k,c}),\overline{\mathbb D})&=&\max\left\{
\sup_{z\in K(P_{a_k,c})}d(z,\overline{\mathbb D}),
\sup_{w\in \overline{\mathbb D}}d(w,K(P_{a_k,c}))\right\}<\epsilon.
\end{eqnarray*}
Thus, $\lim_{k\rightarrow\infty}K(P_{a_k,c})=\overline{\mathbb D}$.

Now let $b_k$ be the subsequence of $n\in B_{\epsilon}$.  Again, by Proposition \ref{formula} and Lemma \ref{bound}, there is an $N_1$ such that for any $b_k\geq N_1$, we have $K(P_{n,c})\subset\mathbb A(1-\epsilon/2,1+\epsilon/2)$.  Also, note that $0\notin K(P_{n,c})$, so $K(P_{nc})$ is totally disconnected and $J(P_{n,c})=K(P_{n,c})$. Then for any $z\in J(P_{b_k,c})$, we have
\begin{equation*}
d(z,S^1)=\inf_{s\in S^1}|z-s|<\epsilon.
\end{equation*}
By Lemma \ref{ball}, there is an $N_2\geq N_1$ such that for any $b_k\geq N_2$ and for any $s\in S^1$,
\begin{equation*}
d(s,J(P_{b_k,c}))=\inf_{z\in J(P_{b_k,c})}|z-s|<\epsilon.
\end{equation*}
Thus, it follows that $d_{\mathcal H}(J(P_{b_k,c}),S^1)<\epsilon$ and $\lim_{k\rightarrow\infty}J(P_{b_k,c})=S^1$.
\end{proof}

\bibliographystyle{plain}
\bibliography{main2}



\end{document}